\documentclass[pdftex]{svmult}


\usepackage{mathptmx}       
\usepackage{helvet}         
\usepackage{courier}        
\usepackage{type1cm}        
%
\usepackage{makeidx}         
\usepackage{graphicx}        
\usepackage{multicol}        
\usepackage[bottom]{footmisc}

\usepackage{amsmath}
\usepackage{amsfonts}
\usepackage{amssymb}

\usepackage[T1]{fontenc}
\usepackage[utf8]{inputenc}
\usepackage{graphicx}
\usepackage[usenames, dvipsnames]{color}
\usepackage{tikz}
\usetikzlibrary{plotmarks}
\usepackage{pgfplots}
\pgfplotsset{compat=newest}
\newlength\figurewidth

\usepackage[activate={true,nocompatibility},spacing,kerning]{microtype}
\usepackage[pass]{geometry}

\definecolor{gray}{rgb}{0.9,0.9,0.9}
\definecolor{windowsgray}{rgb}{0.8,0.8,0.8}
\definecolor{myblue}{rgb}{0.0,0.475,0.758}
\definecolor{darkgreen}{rgb}{0,0.6,0}
\definecolor{darkred}{rgb}{0.8,0,0}
\definecolor{darkblue}{rgb}{0,0,0.8}
\definecolor{middlegray}{rgb}{0.7,0.7,0.7}

\makeindex             


\renewcommand{\leq}{\leqslant}
\renewcommand{\geq}{\geqslant}

\newcommand{\N}{\ensuremath{\mathbb{N}}}
\newcommand{\C}{\ensuremath{\mathbb{C}}}
\newcommand{\Z}{\ensuremath{\mathbb{Z}}}

\let\Re\relax
\let\Im\relax
\DeclareMathOperator\Re{Re}
\DeclareMathOperator\Im{Im}
\DeclareMathOperator\GL{GL}
\DeclareMathOperator\Log{Log}
\DeclareMathOperator\ind{ind}
\DeclareMathOperator\coker{coker}
\DeclareMathOperator\1{id}
\DeclareMathOperator\res{res}

\addtolength{\paperheight}{-2cm}
\addtolength{\paperwidth}{-2cm}
\addtolength{\evensidemargin}{-0.8cm}
\addtolength{\oddsidemargin}{-0.8cm}
\addtolength{\topmargin}{0cm}

\usepackage{listings}
\definecolor{mygreen}{rgb}{0,0.6,0}
\definecolor{mygray}{rgb}{0.5,0.5,0.5}
\definecolor{lightgray}{rgb}{0.925,0.925,0.925}
\definecolor{mymauve}{rgb}{0.58,0,0.82}
\lstdefinelanguage{Julia}%
  {morekeywords={abstract,break,case,catch,const,continue,do,else,elseif,%
      end,export,false,for,function,immutable,import,importall,if,in,%
      macro,module,otherwise,quote,return,switch,true,try,type,typealias,%
      using,while,norm,Fun,cauchy,Circle,Cauchy,@time,exp,lfact},%
   sensitive=true,%
   alsoother={$},%
   morecomment=[l]\#,%
   morecomment=[n]{\#=}{=\#},%
   morestring=[s]{"}{"},%
   morestring=[m]{'}{'},%
}[keywords,comments,strings]%
\lstset{ %
  language=Julia,
  backgroundcolor=\color{lightgray}, 
  basicstyle=\ttfamily\small, 
  breakatwhitespace=false,         
  breaklines=true,                 
  captionpos=b,                    
  commentstyle=\color{mygreen},    
  columns=fixed,
  keepspaces=true,
  extendedchars=true,              
  keywordstyle=\color{darkblue},       
  numbers=none,                    
  numbersep=5pt,                   
  numberstyle=\tiny\color{mygray}, 
  rulecolor=\color{black},         
  showspaces=false,                
  showstringspaces=false,           
  showtabs=false,                  
  stepnumber=1,                    
  stringstyle=\color{mymauve}, 
  tabsize=2                   
}

\begin{document}

\title*{Numerical Methods for the Discrete Map \boldmath$Z^a$\unboldmath}

\author{Folkmar Bornemann, Alexander Its, Sheehan Olver, and Georg Wechslberger}

\maketitle

\vspace*{-2.75cm}
{\bf Abstract}\quad As a basic example in nonlinear theories of discrete complex analysis, 
we explore various numerical methods for the accurate evaluation
of the discrete map $Z^a$ introduced by Agafonov and Bobenko. The methods are  based either
on a discrete Painlevé equation or on the Riemann--Hilbert method. In the latter case, the
 underlying structure of a triangular Riemann--Hilbert problem with a non-triangular solution
 requires special care in the numerical approach.
  Complexity and numerical stability are discussed, the results are illustrated by numerical examples.

\section{Introduction}

Following the famous ideas of Thurston's for a \emph{nonlinear} theory of discrete complex analysis
based on circle packings, Bobenko and Pinkall \cite{BP96} defined a {\em discrete conformal} map as a 
complex valued function $f:\Z^2 \subset \C \to \C$ satisfying
\begin{equation}\label{eq1}
\frac{(f_{n,m} - f_{n+1,m})(f_{n+1,m+1} - f_{n,m+1})}
{(f_{n+1,m} - f_{n+1,m+1})(f_{n,m+1} - f_{n,m})} = -1.
\end{equation}
That is, the cross ratio on each elementary quadrilateral (fundamental cell) of the lattice $\Z^2$ is $-1$; infinitesimally, 
this property characterizes conformal maps among the smooth ones. A discrete conformal map $f_{n,m}$ is
called an {\em immersion} if the interiors of adjacent elementary quadrilaterals are disjoint.

A central problem in discrete complex analysis is to find discrete conformal analogues of classical holomorphic
functions that are immersions;  simply evolving just the boundary values of the classical
function by \eqref{eq1} would not work \cite{MR1676682}. To solve this problem, Bobenko \cite{MR1705222} suggested 
to augment \eqref{eq1} by another equation:
using methods from the theory of integrable systems it can be shown that the non-autonomous system of constraints
\begin{equation}\label{eq2}
a f_{n,m} =2n\frac{(f_{n+1,m} - f_{n,m})(f_{n,m} - f_{n-1,m})}
{(f_{n+1,m} - f_{n-1,m})}
+ 
2m\frac{(f_{n,m+1} - f_{n,m})(f_{n,m} - f_{n,m-1})}
{(f_{n,m+1} - f_{n,m-1})},
\end{equation}
obtained as an integrable discretization of the differential equation
\[
a f = x f_x + y f_y = z f_z
\]
that would define $f(z)=z^a$ up to scaling, is compatible with \eqref{eq1}. Agafonov and Bobenko \cite{AB00}
proved that, for $0<a<2$, the 
system \eqref{eq1} and \eqref{eq2} of recursions, applied to the three
initial values
\begin{equation}\label{eq3}
f_{0,0} = 0,\quad f_{1,0} = 1, \quad f_{0,1} = e^{ia\pi/2},
\end{equation}
defines a unique discrete conformal map $Z^a_{n,m} = f_{n,m}$ that is an immersion \cite[Thm.~1]{AB00}.

\begin{figure}[tbp]
\centering
\begin{minipage}{0.425\textwidth}
\setlength\figurewidth{\textwidth}
\input{Images/F20}\\*[1mm]
~
\end{minipage}\hfil
\begin{minipage}{0.475\textwidth}
\includegraphics[width=\textwidth]{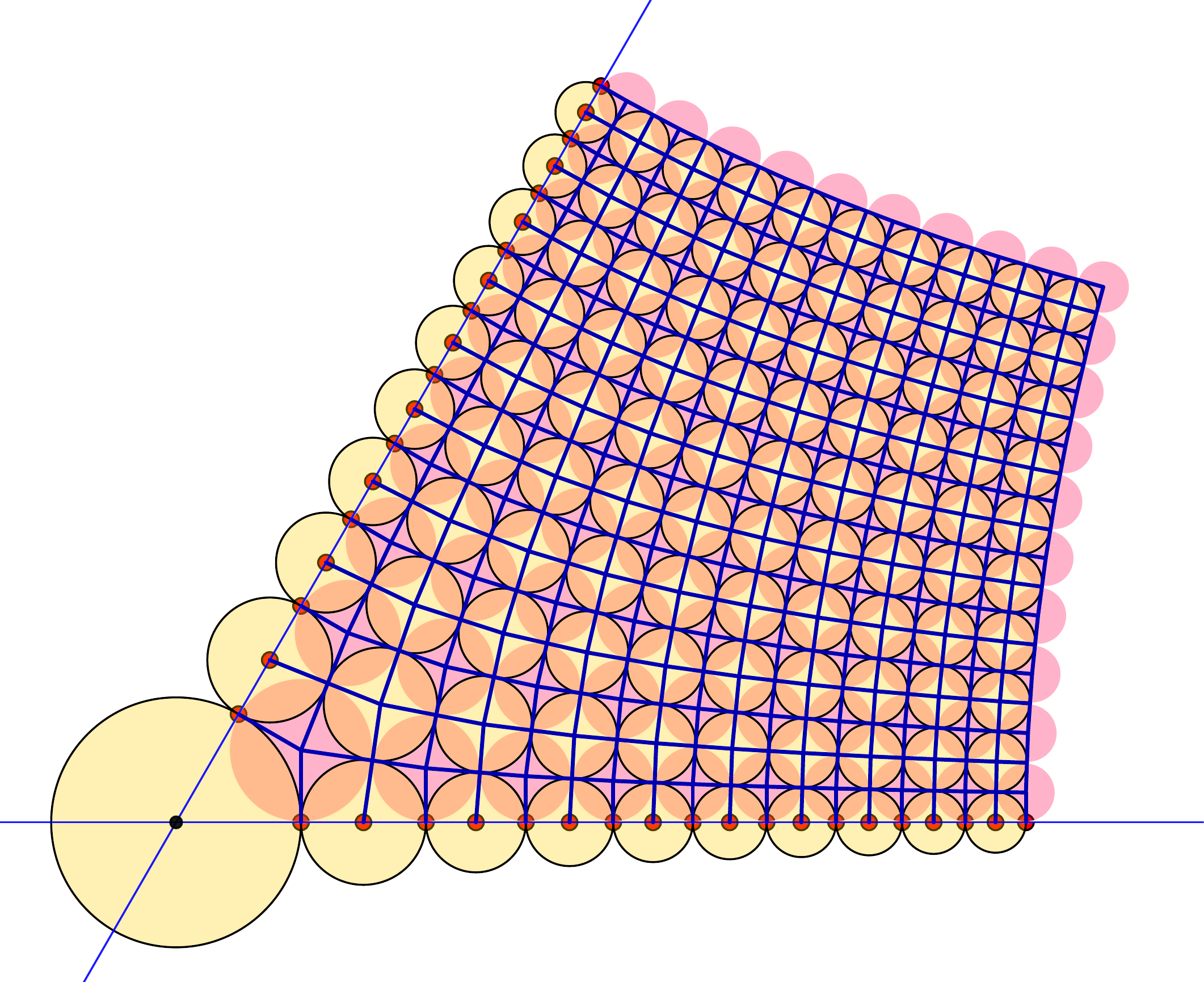}
\end{minipage}
  \caption{Left: red dots are the discrete $Z^{2/3}$ for $0\leq n,m \leq 19$; blue circles are the asymptotics
  given by \eqref{eq:asympt}. Right: the Schramm circle pattern of the discrete $Z^{2/3}$ [courtesy of
  J. Richter-Gebert].
  }\label{fig1}
\end{figure}

Moreover, they  showed \cite[Sect.~3]{AB00} that this discrete conformal map $Z^a$ determines a circle pattern of Schramm
type, i.e., an orthogonal circle pattern with the combinatorics of the square grid, see Fig.~\ref{fig1}. They conjectured,
recently proved by Bobenko and Its \cite{BI15} using the Riemann--Hilbert method, that asymptotically
\begin{subequations}\label{eq:asympt}
\begin{equation}
Z^a_{n,m} = c_a\left(\frac{n+im}{2}\right)^a
 \left( 1 + O\left(\frac{1}{n^2 + m^2}\right)\right) \qquad (n^2 + m^2 \to \infty)
\end{equation}
with the constant
\begin{equation}
c_a = \frac{\Gamma\left(1 - \frac{a}{2}\right)}{\Gamma\left(1 + \frac{a}{2}\right)}.
\end{equation}
\end{subequations}
For $0<a\leq 1$, as exemplified in Fig.~\ref{fig1}, this asymptotics is  already accurate to plotting accuracy
for all but the very smallest values of $n$ and $m$. If $a\to 2$, however, it requires increasingly larger values of $n$ and $m$
to become accurate.

In this work we study the stable and accurate \emph{numerical} calculation of $Z^a$; to the best of our knowledge for the first time in the literature. This is an interesting 
mathematical problem in itself,
but the underlying methods should be applicable to a large set of similar discrete integrable systems. Now, the basic difficulty
is that the evolution of the discrete dynamical system \eqref{eq1} and \eqref{eq2}, 
starting from the initial values~\eqref{eq3}, is numerically highly unstable, see Fig.~\ref{fig2}.\footnote{All numerical
calculations are done in hardware arithmetic using double precision.}

\begin{figure}[tbp]
\centering
\setlength\figurewidth{0.7\textwidth}
\input{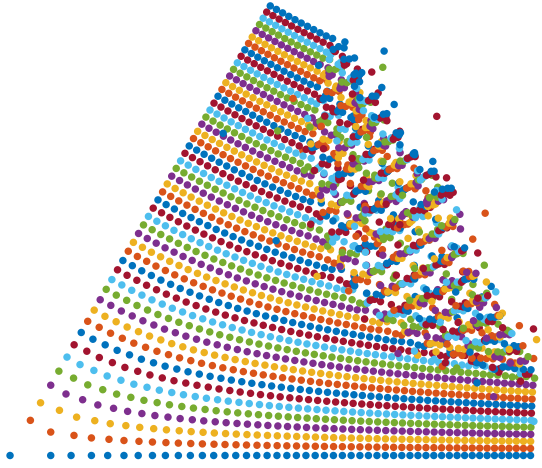}
\caption{Numerical discrete $Z^{2/3}_{n,m}$ ($0\leq n,m\leq 49$): recursing from the initial values \eqref{eq3} by a straightforward
application of the system \eqref{eq1} and \eqref{eq2} quickly develops numerical instabilities. The color
cycles with the coordinate $m$.}\label{fig2}
\end{figure}

The support of the stencils of \eqref{eq1} and \eqref{eq2} has the form of a square and a five-point cross in the lattice 
$\Z^2$, that is,
\[
\begin{matrix}
f_{n,m+1} & f_{n+1,m+1}\\*[1mm]
f_{n,m} & f_{n+1,m}
\end{matrix}\qquad\text{and}\qquad
\begin{matrix}
 & f_{n,m+1}\\*[1mm]
f_{n-1,m} & f_{n,m} & f_{n+1,m}\\*[1mm]
&f_{n,m-1}
\end{matrix},
\]
with the latter reducing to be dimensional along the boundary of $\Z^2_+$, namely
\[
\begin{matrix}
f_{0,m+1}\\*[1mm]
f_{0,m}\\*[1mm]
f_{0,m-1}
\end{matrix}, \qquad\text{resp.}\qquad
\begin{matrix}
f_{n-1,0} & f_{n,0} & f_{n+1,0},
\end{matrix}
\]
if $n=0$ or $m=0$. Thus, the forward evolution can be organized as follows. If $f_{n,m}$ is known for
$0\leq n,m \leq N$ the upper index $N$ is increased to $N+1$ according to:
\begin{center}
\begin{minipage}{0.8\textwidth}
\begin{enumerate}
\item use \eqref{eq2} to compute the {\em boundary} values $f_{N+1,0}$ and $f_{0,N+1}$;
\item use \eqref{eq1} to compute the {\em row} $f_{N+1,m}$, $1\leq m \leq N$;
\item use \eqref{eq1} to compute the {\em column} $f_{n,N+1}$, $1\leq n \leq N$;
\item use \eqref{eq1} to compute the {\em diagonal} value $f_{N+1,N+1}$.
\end{enumerate}
\end{minipage}
\end{center}
It is this algorithm that gives the unstable calculation shown in Fig.~\ref{fig2}. Alternatively, one could use \eqref{eq2} to calculate the row values $f_{N+1,m}$, $1\leq m \leq N-1$,
and column values  $f_{n,N+1}$, $1\leq n \leq N-1$, up to the first sub- and
superdiagonal (note that these calculations do \emph{not} depend on order within the rows and columns). The missing values are then completed by using \eqref{eq1}. However, this alternative
forward evolution gives a result that is visually indistinguishable from Fig.~\ref{fig2}. 

As can be seen from Fig.~\ref{fig2}, the numerical instability starts spreading from the diagonal elements $f_{n,n}$. 
In fact, there is an initial exponential growth of numerical errors to be found in the diagonal entries, see
Fig.~\ref{fig3}. Such a numerical instability of an evolution is the direct consequence of the instability of the
underlying dynamical system, that is, of positive Lyapunov exponents.

\begin{figure}[tbp]
\centering
\setlength\figurewidth{0.575\textwidth}
%
%
\definecolor{mycolor1}{rgb}{0.00000,0.44700,0.74100}%
\definecolor{mycolor2}{rgb}{0.85000,0.32500,0.09800}%
\definecolor{mycolor3}{rgb}{0.92900,0.69400,0.12500}%
\begin{tikzpicture}

\begin{axis}[%
width=0.950920245398773\figurewidth,
height=0.75\figurewidth,
at={(0\figurewidth,0\figurewidth)},
scale only axis,
separate axis lines,
every outer x axis line/.append style={white!15!black},
every x tick label/.append style={font=\color{white!15!black}},
xmin=0,
xmax=50,
xlabel={$n$},
xmajorgrids,
every outer y axis line/.append style={white!15!black},
every y tick label/.append style={font=\color{white!15!black}},
ymode=log,
ymin=1e-16,
ymax=100,
yminorticks=true,
ylabel={error},
ymajorgrids,
yminorgrids
]
\addplot [color=mycolor1,line width=0.8pt,only marks,mark=*,mark options={solid},forget plot]
  table[row sep=crcr]{%
2	1.11022302462516e-16\\
3	7.7715611723761e-16\\
4	1.60118641699469e-15\\
5	7.58859974442808e-15\\
6	2.69141510402835e-14\\
7	1.28168985305061e-13\\
8	7.05443083338607e-13\\
9	4.01707458109793e-12\\
10	2.26515885533321e-11\\
11	1.27169102447025e-10\\
12	7.11415993308925e-10\\
13	3.97407673261213e-09\\
14	2.21961230460869e-08\\
15	1.24045485986177e-07\\
16	6.9395712480086e-07\\
17	3.88708517603187e-06\\
18	2.18015646031847e-05\\
19	0.000122431145733303\\
20	0.000688015612859279\\
21	0.00385712553921828\\
22	0.0211915183519683\\
23	0.102925849227913\\
24	0.292487344486066\\
25	0.398995546860738\\
26	0.44166555356837\\
27	0.565910722898638\\
28	0.764411614547993\\
29	0.843505415515225\\
30	0.9321123919951\\
31	1.07478227918614\\
32	1.2584346750076\\
33	1.42137807508256\\
34	1.48437938030345\\
35	1.57593584633722\\
36	1.77302113182222\\
37	1.88678766270227\\
38	1.97957942757555\\
39	2.09465842228319\\
40	2.27349617275413\\
41	2.41946237140379\\
42	2.4745874256652\\
43	2.58336659852967\\
44	2.78157637964196\\
45	2.90221574173493\\
46	2.97896133165648\\
47	3.08371736562182\\
48	3.3014333114154\\
49	3.39176084141234\\
50	3.44471781019914\\
};
\addplot [color=mycolor2,line width=0.8pt,only marks,mark=*,mark options={solid},forget plot]
  table[row sep=crcr]{%
2	4.00296604248672e-16\\
3	1.73422380365255e-15\\
4	8.24634920558061e-15\\
5	4.2169333854586e-14\\
6	2.21412487868285e-13\\
7	1.18416311300408e-12\\
8	6.41434845130347e-12\\
9	3.50707999144994e-11\\
10	1.93143228868507e-10\\
11	1.06983845824283e-09\\
12	5.95386997095598e-09\\
13	3.32642485480112e-08\\
14	1.86461322613817e-07\\
15	1.0481544777179e-06\\
16	5.90638889464285e-06\\
17	3.33537818796904e-05\\
18	0.000188705553078116\\
19	0.00106942700944195\\
20	0.00606983261987984\\
21	0.034498496672338\\
22	0.195630863025333\\
23	0.983147730908515\\
24	1.92369925038217\\
25	2.00333742843315\\
26	1.88175211032564\\
27	0.837757441578744\\
28	0.00558496147435826\\
29	0.797462612411627\\
30	1.86972737389947\\
31	2.00330890247277\\
32	1.68135146223126\\
33	0.471819320988728\\
34	0.353507326930769\\
35	1.54343491090764\\
36	2.00218218226826\\
37	1.84650408867109\\
38	0.73968515579186\\
39	0.236213320371232\\
40	1.42067451232662\\
41	1.9971658356739\\
42	1.84842006698433\\
43	0.743434930411291\\
44	0.295007389735192\\
45	1.51409180756564\\
46	2.0058219471549\\
47	1.75346982165874\\
48	0.557826524882148\\
49	0.509678182903123\\
50	1.71531314787688\\
};
\addplot [color=mycolor3,line width=0.8pt,only marks,mark=*,mark options={solid},forget plot]
  table[row sep=crcr]{%
4	2.22044604925031e-16\\
5	4.44089209850063e-16\\
6	3.10862446895044e-15\\
7	1.68753899743024e-14\\
8	9.14823772291129e-14\\
9	5.01154673315796e-13\\
10	2.76001443921814e-12\\
11	1.52879930936933e-11\\
12	8.50812753583341e-11\\
13	4.75348871376013e-10\\
14	2.66454813768746e-09\\
15	1.49782173330948e-08\\
16	8.44028040791756e-08\\
17	4.766284114055e-07\\
18	2.69661874874316e-06\\
19	1.52822079173554e-05\\
20	8.67382208145084e-05\\
21	0.000492919882341125\\
22	0.00278260842782219\\
23	0.012294583442984\\
24	0.0085977185890822\\
25	0.003582302946739\\
26	0.00995108744688111\\
27	0.0116776503204359\\
28	0.00488422127499577\\
29	0.0112570960702774\\
30	0.0111500823454327\\
31	0.00598135421790214\\
32	0.0139863278667578\\
33	0.00912627174505287\\
34	0.00805421472102985\\
35	0.0151152386162916\\
36	0.00752517486087334\\
37	0.0121726141286096\\
38	0.0120629638686995\\
39	0.00834464423392611\\
40	0.0155582890501429\\
41	0.00879194947096318\\
42	0.0126418745198125\\
43	0.012597277295378\\
44	0.0094666511141237\\
45	0.0157736310976184\\
46	0.00927908847350056\\
47	0.0143755642173253\\
48	0.0117878870066137\\
49	0.0113408717070878\\
50	0.0151111388312384\\
};
\end{axis}
\end{tikzpicture}%
\caption{Numerical error of the diagonal values $f_{n,n}$ from Fig.~\protect\ref{fig2} (blue),
of the $x_n$ as in~\protect\eqref{eq:xn} and computed by forward evolution of the discrete Painlevé II equation (red), and of the corresponding invariant $|x_n|=1$ (yellow); $a=2/3$. 
They share the same rate of initial exponential growth.}\label{fig3}
\end{figure}
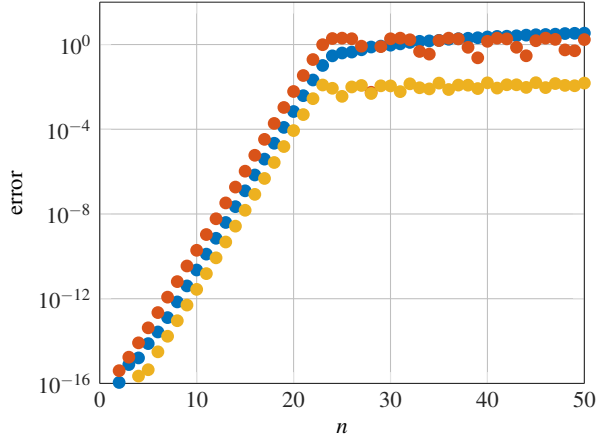

As a remedy we suggest two different approaches to calculating $Z^a_{n,m}$. In Section~\ref{sect:stable}
we stabilize the calculation of the diagonal values by solving a boundary value problem for an underlying discrete
Painlevé II equation and in Sections~\ref{sect:RHP}--\ref{sect:ny} we explore numerical methods based on the Riemann--Hilbert method. The latter reveals an interesting structure (Section~\ref{sect:triang}): the Riemann--Hilbert problem has triangular data but a non-triangular solution; the operator equation can thus be written as a uniquely solvable block triangular system where the {\em infinite-dimensional} diagonal operators are {\em not} invertible. We discuss two different ways to prevent this particular structure
from hurting {\em finite-dimensional} numerical schemes:  a coefficient-based spectral method with infinite-dimensional linear algebra in Section~\ref{sect:spectral} and a modified Nyström method based on least squares in Section~\ref{sect:ny}.

\section{Discrete Painlevé II Separatrix as a Boundary Value Problem}\label{sect:stable}

Since the source of the numerical instability of the direct evolution of the discrete dynamical system \eqref{eq1} and \eqref{eq2}
is found in the  diagonal elements $f_{n,n}$, we first express the $f_{n,n}$ directly in terms of a one-dimensional
three-term 
recursion and then study its stable numerical evaluation. To begin with, Agafonov and Bobenko \cite[Prop.~3]{AB00} proved that the geometric quantities
\begin{equation}\label{eq:xn}
x_n^2 = \frac{f_{n,n+1}-f_{n,n}}{f_{n+1,n}-f_{n,n}},\qquad \arg x_n \in (0,\pi/2),
\end{equation}
have invariant magnitude $|x_n|=1$ (see the circle packing in Fig.~\ref{fig1}) and that they satisfy the following form
of the discrete Painlevé II equation
\begin{equation}
(n+1)(x_n^2-1)\left(\frac{x_{n+1}-ix_n}{i+x_nx_{n+1}}\right)- 
 n(x_n^2+1)\left(\frac{x_{n-1}+ix_n}{i+x_{n-1}x_n}\right) =
a x_n,
\label{eq:painleve}
\end{equation}
with initial value $x_0 = e^{ia \pi /4}$. Note that for $n=0$ this nonlinear three-term recurrence degenerates and gives
the missing second initial value, namely
\begin{equation}\label{eq:x1}
x_1=\frac{x_0(x_0^2+a -1)}{i((a -1)x_0^2+1)}.
\end{equation}
Reversely, given the solution $x_n$ of this equation, the diagonal elements
$f_{n,n}$ can be calculated according to the simple recursion \cite[p.~176]{AB00}
\begin{equation}\label{eq:diagonal}
u_n = \frac{r_n}{\Re x_n}, \quad r_{n+1}=u_n \cdot \Im x_n, \quad g_{n+1} = g_n + u_n, 
\quad f_{n+1,n+1} = g_{n+1} e^{ia\pi/4},
\end{equation}
with inital values $g_0 = 0$, $r_0=1$ (note that $u_n$, $r_n$, $g_n$ are all positive); the sub- and superdiagonal elements $f_{n+1,n}$ and $f_{n,n+1}$ are obtained
from \eqref{eq1} and \eqref{eq:xn} by
\begin{subequations}\label{eq:subdiagonal}
\begin{align}
f_{n+1,n} &= \frac{(x_n^2-1)f_{n,n} + (x_n^2+1)f_{n+1,n+1}}{2x_n^2},\\*[2mm]
f_{n,n+1} &= \frac{(1-x_n^2)f_{n,n} + (1+x_n^2)f_{n+1,n+1}}{2}.
\end{align}
\end{subequations}
However, given that $x_n$ is a \emph{separatrix} solution of the discrete Painlevé II equation \cite[p.~167]{AB00},
we expect that a forward evolution of \eqref{eq:painleve}, starting with the initial
values $x_0$ and $x_1$, suffers from exactly the same instability as the calculation of the diagonal values $f_{n,n}$
by evolving \eqref{eq1} and \eqref{eq2}. Fig.~\ref{fig3} shows that this is indeed the case, exhibiting
the same initial exponential growth rate; it also shows that the deviation of the calculated values of $|x_n|$ 
from its invariant value $1$ can serve as an explicitly computable error indicator.

In the continuous case of the Hastings--McLeod solution of Painlevé II, which also constitutes a separatrix, Bornemann 
\cite[Sect.~3.2]{B10}
suggested to address such problems by solving an asymptotic two-point {\em boundary} value problem instead of the originally given
evolution problem. To this end, one has to solve the {\em connection problem} first, that is, one has to establish the
asymptotics of $x_n$ as $n\to \infty$. By inserting the known asymptotics \eqref{eq:asympt} of $Z_{n,m}^a$
into the defining equation \eqref{eq:xn}, we obtain
\[
x_n = e^{i\pi/4}(1+O(n^{-1}))\qquad (n\to\infty).
\]
Since in actual numerical calculations we need accurate approximations already for moderately large $n$, we 
match the coefficients of an expansion in terms of $n^{-1}$ to the discrete Painlevé II equation \eqref{eq:painleve}
and get, as $n\to\infty$,
{\small
\begin{multline*}
x_n = 
e^{\frac{i \pi }{4}} 
\bigg(1+\frac{i (a -1)}{2
   n} + \frac{-a ^2+(2-2 i) a
   -(1-2 i)}{8 n^2} 
   - \frac{i
   \left(a ^3-(3-2 i) a ^2-(1+4
   i) a +(3+2 i)\right)}{16
   n^3}\\*[2mm]
   +\frac{3 a ^4-(12-12 i) a
   ^3-(2+36 i) a ^2+(28+4 i) a
   -(17-20 i)}{128 n^4}\\*[2mm]
   +\frac{i \left(3
   a ^5-(15-12 i) a ^4-(30+48 i)
   a ^3+(150+24 i) a ^2-(5-48 i)
   a -(103+36 i)\right)}{256
   n^5}\\*[2mm] + O(n^{-6})\bigg).
\end{multline*}}%
We denote the r.h.s. of this asymptotic formula, without the $O(n^{-6})$ term, by $x_{n,6}$.
\begin{figure}[tbp]
\centering
\setlength\figurewidth{0.7\textwidth}
\vspace*{0.41cm}
\input{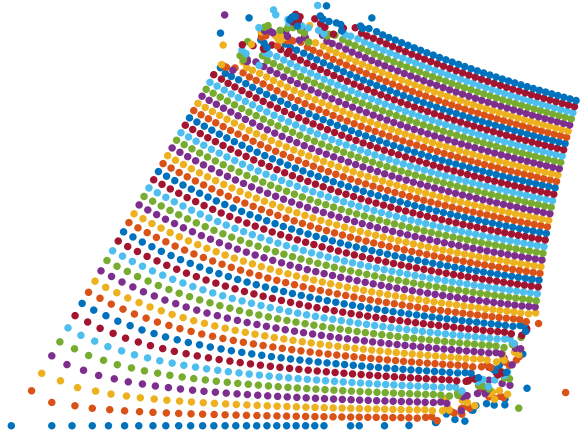}
\caption{Numerical discrete $Z^{2/3}_{n,m}$ ($0\leq n,m\leq 49$): evolving from accurate values of $f_{n,n}$,
$f_{n+1,n}$, $f_{n,n+1}$ close to the diagonal back to the boundary by using the cross-ratio relations \eqref{eq1} develops numerical instabilities. The color
cycles with the coordinate $m$.}\label{fig4}
\end{figure}

Next, using Newton's method, we solve the nonlinear system of $N+1$ equations in $N+1$ unknowns $x_0,\ldots,x_N$
given by the discrete Painlevé equation \eqref{eq:painleve}
for $1\leq n \leq N-1$ and the two boundary conditions
\[
x_1=\frac{x_0(x_0^2+a -1)}{i((a -1)x_0^2+1)},\qquad x_N = x_{N,6}.
\]
Note that the value $x_0 = e^{ia\pi/4}$ is not explicitly used and must be obtained as output of the Newton solve,
that is, it can be used as an measure of success. We choose $N$ large enough that $|x_{N,6}| \doteq 1 $ up to machine precision
(about $N\approx 300$ uniformly in $a$). Then, using the excellent initial guesses (for the accuracy of the
asymptotics cf. the left panel of Fig.~\ref{fig1})
\[
x_0^{(0)} = e^{ia\pi/4},\qquad x_n^{(0)} = x_{n,6}\quad (1\leq n \leq N),
\]
Newton's method will converge in about just 10 iterations to machine precision yielding a numerical solution that
satisfies the invariant $|x_n|=1$ also up to machine precision. Since the Jacobian of a nonlinear system stemming
from a three-term recurrence is {\em tridiagonal}, each Newton step has an operation count of order $O(N)$. Hence,
the overall complexity of accurately calculating the values $x_n$, $0\leq n \leq N$, is of optimal order $O(N)$.

Finally, having accurate values of $x_n$ at hand, and therefore by \eqref{eq:diagonal} and \eqref{eq:subdiagonal} also those of
$f_{n,n}$, $f_{n+1,n}$ and $f_{n,n+1}$, one can calculate the missing values of $f_{n,m}$ row- and column-wise,
starting from the second sup- and superdiagonal and evolving to the boundary, either by evolving the cross-ratio
relations \eqref{eq1} or by evolving the discrete differential equation \eqref{eq2}. It turns out that the first option 
develops numerical
instabilities spreading from the boundary, see Fig.~\ref{fig4}, whereas the second option is, for a wide range
of the parameter $a$, numerically observed to be perfectly stable,
see Fig.~\ref{fig5}. Note that this stable algorithm only differs from the alternative direct evolution discussed
in the introduction in how the values  close to the diagonal, that is $f_{n,n}$,
$f_{n+1,n}$ and $f_{n,n+1}$, are computed. 

The total complexity of this stable numerical calculation of the array $f_{n,m}$ with $0\leq n,m \leq N$ 
is of optimal order $O(N^2)$. 

\begin{figure}[tbp]
\centering
\setlength\figurewidth{0.7\textwidth}
\input{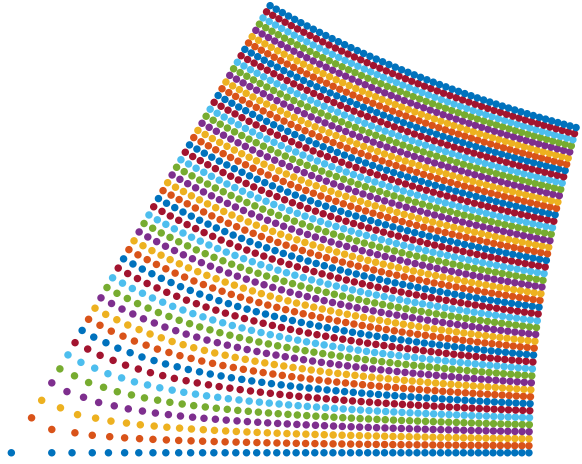}
\caption{Numerical discrete $Z^{2/3}_{n,m}$ ($0\leq n,m\leq 49$): recursing from accurate values of $f_{n,n}$,
$f_{n+1,n}$, $f_{n,n+1}$ close to the diagonal back to the boundary by using the discrete differential equation \eqref{eq2} is perfectly stable. The color
cycles with the coordinate $m$.
  }\label{fig5}
\end{figure}

\section{The Riemann--Hilbert Method}\label{sect:RHP}

\begin{figure}[tbp]
\centering
   \begin{minipage}[t][2cm]{4cm}
	   \vspace*{3mm} 
	   \begin{tikzpicture}
	    	[rhcontour/.style={->,>=to,ultra thick}]
	       \draw [rhcontour] (-1,0) arc[radius=0.75, start angle=90, end angle=450]; 
	       \draw [rhcontour]  (1,0) arc[radius=0.75, start angle=90, end angle=450]; 
	       \draw [rhcontour,white]  (0,1) arc[radius=2, start angle=90, end angle=450];  
	   \end{tikzpicture}
   \end{minipage}
\hfil  
      \begin{minipage}[t]{4cm}
	   \vspace{0.1pt} 
	   \begin{tikzpicture}
	    	[rhcontour/.style={->,>=to,ultra thick}]
	       \draw [rhcontour] (-1,0) arc[radius=0.75, start angle=90, end angle=450]; 
	       \draw [rhcontour]  (1,0) arc[radius=0.75, start angle=90, end angle=450]; 
	       \draw [rhcontour,darkred]  (0,1.25) arc[radius=2, start angle=90, end angle=450]; 
	   \end{tikzpicture}
   \end{minipage}
\caption{Contours for the $X$-RHP \cite[p.~15]{BI15}. Left: two non-intersecting circles $\Gamma_1$ centered at $\pm 1$ (black); right: additional circle $\Gamma_2$ centered at $0$ (red) for standard normalization at $\infty$.
  }\label{fig6}
\end{figure}
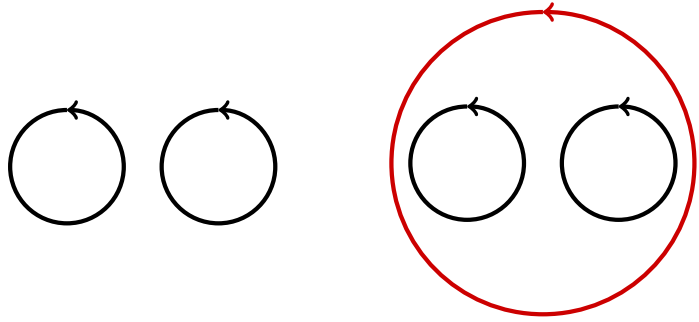

Based on the integrability of the system \eqref{eq1} and \eqref{eq2}, by identifying \eqref{eq1} as the 
compatibility condition of a Lax pair of linear difference equations \cite{BP96} and  by using isomonodromy, Bobenko and Its \cite[p.~15]{BI15} expressed the $Z^a$ map
in terms of the following Riemann--Hilbert problem (which is a slightly transformed and transposed version of the $X$-RHP by these authors): Let $\Gamma_1$ be the oriented contour built of
two non-intersecting circles in the complex plane centered at $z=\pm 1$ (see Fig.~\ref{fig6} left), the holomorphic
function $X: \C\setminus\Gamma_1 \to \GL(2)$ satisfies the jump condition
\begin{subequations}\label{eq:XRHP}
\begin{equation}
X_+(\zeta) = G_1(\zeta) X_-(\zeta)\qquad (\zeta \in \Gamma_1)
\end{equation}
with the jump matrix\footnote{To make $G_1$ holomorphic in the vicinity of $\Gamma_1$ we place the branch-cut of $\zeta^{-a/2}$
at the negative imaginary axis, that is, we take, using the principal branch $\Log$ of the logarithm,
\[
e^{ia \pi/2} \zeta^{-a/2} = e^{ia \pi/4} e^{-\frac{a}{2} \Log(\zeta/i)}.
\]
}
\begin{equation}\label{eq:G1}
G_1(\zeta) =
\begin{pmatrix} 
1 & 0 \\*[2mm]
e^{ia \pi/2} \zeta^{-a/2} (\zeta-1)^{-m} (\zeta+1)^{-n}\; & 1 
\end{pmatrix}
\end{equation}
subject to the following normalization
\begin{equation}\label{eq:norm_non_standard}
X(z) = 
\begin{pmatrix}
z^{\frac{m+n}{2}} & 0 \\*[1mm]
0 & z^{-\frac{m+n}{2}}
\end{pmatrix}(I+O(z^{-1}))\qquad (z\to \infty).
\end{equation}
Here, we restrict ourselves to values of $n$ and $m$ having the same parity such that $(m+n)/2$ is an integer.
The discrete $Z^a$ map is now given by the values $f_{n,m}$ extracted from an $LU$-decomposition at $z=0$, namely, 
\[
X(0) = 
\begin{pmatrix}
1 & 0\\*[1mm]
(-1)^{m+1}f_{n,m}\; & 1 
\end{pmatrix}
\begin{pmatrix}
\;\bullet\; & \;\bullet\;\\*[1mm]
0 & \;\bullet\; 
\end{pmatrix},
\]
that is,
\[
f_{n,m} = (-1)^{m+1} \frac{X_{21}(0)}{X_{11}(0)}.
\]
Subsequently, using the Deift--Zhou nonlinear steepest decent method,
Bobenko and Its  \cite{BI15} transform this $X$-RHP to a series of Riemann--Hilbert problems that are more suitable for
asymptotic analysis. The last one of this series before introducing a global parametrix,\footnote{Though 
the parametrix leads to a near-identity RHP, the actually computation of the para\-metrix would require solving a problem 
that is, numerically, of similar difficulty as the $S$-RHP itself.} the $S$-RHP \cite[pp. 24--27]{BI15},
is based on the contour shown in the left part of Fig.~\ref{fig7}. This rather elaborate $S$-RHP is, after normalizing at $z=0$ and $z\to\infty$ appropriately, amenable to the spectral collocation method
of Olver \cite{Olver12}; we skip the details which can be found in the thesis of the fourth author G.W. \cite[Sect.~5.4]{Wechslberger}
that extends previous work on automatic contour deformation by Bornemann and Wechslberger \cite{IMAJNA,WB14}.
Here, the relative size of the inner and outer circles shaping the contour system shown in the right part of Fig.~\ref{fig7} have to be carefully adjusted to the
parameters $n$ and $m$ to keep the condition number at a reasonable size. The complexity of computing
$f_{n,m}$ for fixed $n$ and $m$ is then basically independent of $m$ and $n$.

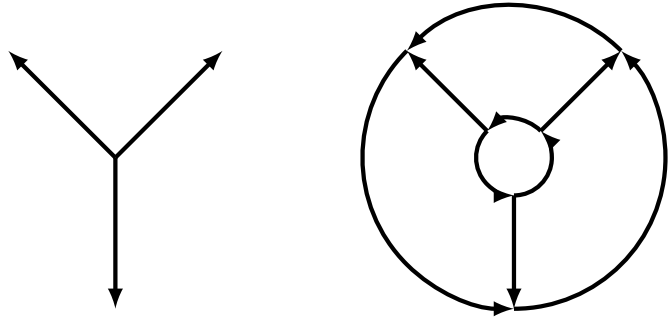
\begin{figure}[tbp]
\centering
\begin{tikzpicture}[rhcontour/.style={->,>=latex,ultra thick}]
\draw[rhcontour] (0.,0) -- (0.,-2.) ;
\draw[rhcontour] (0.,0) -- (1.41421,1.41421) ;
\draw[rhcontour] (0.,0) -- (-1.41421,1.41421)  ;
\end{tikzpicture}
\hfil
\begin{tikzpicture}[rhcontour/.style={->,>=latex,ultra thick}]
\draw[rhcontour] (0.,-0.5) -- (0.,-2.) ;
\draw[rhcontour] (0.353553,0.353553) -- (1.41421,1.41421) ;
\draw[rhcontour] (-0.353553,0.353553) -- (-1.41421,1.41421) ;
\draw[rhcontour](1.41421,1.41421) arc[radius=2., start angle=45., end angle=135.] ;
\draw[rhcontour](-1.41421,1.41421) arc[radius=2., start angle=-225., end angle=-90.] ;
\draw[rhcontour](0.,-2.) arc[radius=2., start angle=-90., end angle=45.] ;
\draw[rhcontour](0.353553,0.353553) arc[radius=0.5, start angle=45., end angle=135.] ;
\draw[rhcontour](-0.353553,0.353553) arc[radius=0.5, start angle=-225., end angle=-90.] ;
\draw[rhcontour](0.,-0.5) arc[radius=0.5, start angle=-90., end angle=45.] ;
\end{tikzpicture}
\caption{Left: Contour of the $S$-RHP \cite[pp. 24--27]{BI15} centered at $z=0$. Right: Modified contour after normalizing the RHP at $z=0$ and $z\to\infty$ (analogously to Fig.~\ref{fig6}); the relative size of the inner and outer circles is chosen depending on $n$ and $m$ and
has a major influence on the condition number of the spectral collocation method. Proper choices steer the condition number
into a regime, which corresponds to a loss of about three to eight digits, see \cite[Sect.~5.4]{Wechslberger}.}\label{fig7}
\end{figure}

In the rest of this work we explore to what extent the analytic transformation from the $X$-RHP to the $S$-RHP is
a necessary preparatory step also numerically, or whether one can use the originally given $X$-RHP as the basis for numerical calculations. To this end, we replace the normalization \eqref{eq:norm_non_standard} by the standard one, that is,
\begin{equation}
X(z) = I + O(z^{-1})\qquad (z\to\infty),
\end{equation}
and introduce a further circle $\Gamma_2$ as shown in the right part of Fig.~\ref{fig6} with the jump condition
\begin{equation}\label{eq:G2}
X_+(\zeta) = G_2(\zeta) X_-(\zeta)\quad (\zeta\in\Gamma_2),\qquad
G_2(\zeta) =
\begin{pmatrix}
\zeta^{\frac{m+n}{2}} & 0 \\*[1mm]
0 & \zeta^{-\frac{m+n}{2}}
\end{pmatrix}.
\end{equation}
\end{subequations}
We define $\Gamma = \Gamma_1 \cup \Gamma_2$ and put $G(\zeta) = G_j(\zeta)$ for $\zeta \in \Gamma_j$ ($j=1,2$).
That way, the Riemann-Hilbert problem is given in the standard form
\begin{equation}\label{eq:XRHP2}
X_+(\zeta) = G(\zeta) X_-(\zeta) \quad (\zeta \in \Gamma),\qquad X(z) = I + O(z^{-1})\quad (z\to \infty).
\end{equation}
Because of $\det G = 1$, the solution $X \in C^\omega(\C\setminus\Gamma, \GL(2))$ is unique, see \cite[p.~104]{Fokas}.

\section{Lower Triangular Jump Matrices and Indices}\label{sect:triang}

We note that the jump matrix $G$ defined in \eqref{eq:G1} and \eqref{eq:G2} is {\em lower triangular}. However, 
even though the non-singular lower triangular matrices form a multiplicative group and the normalization at $z\to\infty$ is also lower
triangular, the solution $X$ turns out to  {\em not} be lower triangular. Arguably the most natural source of RHPs exhibiting this structure are connected to orthogonal polynomials. By renormalizing at $z\to\infty$ the standard RHP for the system of orthogonal polynomials
on the unit circle with complex weight $e^z$, we are led to consider the
following {\em model problem} ($m\in \N$):\footnote{The standard form, see \cite[p.~1124]{BDJ},  of that orthogonal polynomial RHP would be
\[
X_+(\zeta) = \begin{pmatrix}
1 & 0\\
e^\zeta\zeta^{-m} & 1
\end{pmatrix} X_-(\zeta)\quad (|\zeta|=1),\qquad X(z) = 
\begin{pmatrix}
z^m & 0 \\*[1mm]
0 & z^{-m}
\end{pmatrix}(I+O(z^{-1}))\qquad (z\to \infty).
\]
The model problem \eqref{eq:model} is obtained by putting the diagonal scaling at $z\to\infty$ into the jump matrix.}
\begin{equation}\label{eq:model}
Y_+(\zeta) = \begin{pmatrix}
\zeta^m & 0\\
e^\zeta & \zeta^{-m}
\end{pmatrix} Y_-(\zeta)\quad (|\zeta|=1),\qquad Y(z) = I + O(z^{-1}) \quad (z\to \infty).
\end{equation}
Though one could perform a set of transformations to this problem that are standard in
the RHP approach to the asymptotics of orthogonal polynomials on the circle, basically resulting in an analogue
of the $S$-RHP of \cite{BI15}, our point here is to understand the issues of a direct numerical approach
to the $X$-RHP \eqref{eq:XRHP2} in a simple model case.
It is straightforward to check that the {\em unique} solution of \eqref{eq:model} is given explicitly by
\[
Y(z) = \begin{cases}
\begin{pmatrix}
1 \;\;& -z^{-m} e_{m}(-z) \\
0 & 1
\end{pmatrix} &\quad (|z|>1),\\*[6mm]
 \begin{pmatrix}
 z^{m} & -e_{m}(-z) \\
e^z\;\; & z^{-m}(1-e^z e_{m}(-z))
\end{pmatrix}&\quad (|z|<1),
\end{cases}
\]
with 
\begin{equation}\label{eq:exptrunc}
e_k(z) = 1 + z + \frac{z^2}{2!} + \cdots + \frac{z^{k-1}}{(k-1)!} = e^z\frac{\Gamma(k,z)}{\Gamma(k)},
\end{equation}
where $\Gamma(z)$ and $\Gamma(k,z)$ denote the Gamma function and the incomplete Gamma function. 
In particular, we observe that $Y_{12}(0)=-1 \neq 0$.

The nontrivial $12$-component $v$ of a Riemann--Hilbert problem with lower triangular jump matrices,
such as \eqref{eq:XRHP2} or \eqref{eq:model}, can be expressed independently of the other components, it satisfies a homogeneous scalar Riemann--Hilbert problem of its own. Namely, denoting
the $11$-component of $G$ by $g$, we get
\begin{equation}\label{eq:vRHP}
v_+(\zeta) = g(\zeta) v_-(\zeta) \quad (\zeta \in \Gamma),\qquad v(z) = O(z^{-1}) \quad (z\to\infty).
\end{equation}

If the contour is a cycle as in \eqref{eq:XRHP2}, or as in the model problem above, the general theory \cite[§127]{Muskhelishvili} of Riemann--Hilbert problems 
with Hölder continuous boundary regularity
states that the Noether index\footnote{Here, we identify a RHP with an equivalent linear operator equation $Tu = \cdots$, see, e.g., \eqref{eq:RHPSIE} in the next section. We recall that $\lambda = \dim\ker T$ is called the {\em nullity}, $\mu = \dim\coker T$ the
{\em deficiency} and $\kappa = \lambda - \mu$ the {\em Noether index} of a linear operator $T$ with closed range.} $\kappa$ of \eqref{eq:vRHP} 
is given by  the winding number
\[
\kappa = \ind_\Gamma g.
\]
More precisely, the nullity is the sum of the positive partial indices and the deficiency is
the sum of the magnitudes of the negative partial indices, see \cite[Eq.~(127.30)]{Muskhelishvili}. Since there is just one partial index in the scalar case, the nullity of \eqref{eq:vRHP} is $\kappa$ if $\kappa>0$, and the deficiency is $-\kappa$ if $\kappa<0$. 

Thus, in the case of the RHP \eqref{eq:XRHP2}, the nullity of
the scalar sub-RHP for the $12$-component is 
\[
\ind_\Gamma g = \ind_{\Gamma_1} 1 + \ind_{\Gamma_2} \zeta^{(n+m)/2} = \frac{n+m}{2},
\]
 in the case of the model RHP \eqref{eq:model} the corresponding nullity is $m$. 
In both cases, the unique non-zero solution of \eqref{eq:vRHP} that is induced by the solution of the 
defining $2\times 2$ RHP is precisely selected by the compatibility conditions set up by the remaining linear relations of that RHP:
the homogeneous part of these relations must then have Noether index $-\kappa$.

\subsubsection*{Impact on Numerical Methods}

This particular substructure of a Riemann--Hilbert problem with lower triangular jump matrices $G$ is a major challenge for numerical methods. 
If a discretization
of the $2\times 2$ RHP induces a discretization of the scalar subproblem \eqref{eq:vRHP} that results in a homogeneous linear
system with a {\em square} matrix $S_N$ (that is, the same number of equations and unknowns), there are just two (non exclusive) options:
\begin{itemize}
\item $S_N$ is non-singular, which results in a $12$-component $v_N = 0$ that does {\em not} converge;
\item the full system is singular and therefore numerically of not much use (ill-conditioning and convergence issues
will abound). 
\end{itemize} 
Such methods compute fake lower triangular solutions, are ill-conditioned, or both.

To understand this claim, let us denote the $12$-component of the $2\times 2$ discrete solution matrix 
by $v_N$ and the vector of the three other components by $w_N$. By inheriting the subproblem structure such as
\eqref{eq:vRHP} for the $12$-component, the discretization results then in a linear system of the block matrix form
\[
\underbrace{\begin{pmatrix}
S_N & \;0\\*[1mm]
\bullet & T_N 
\end{pmatrix}} _{=A_N}
\begin{pmatrix}
v_N\\*[1mm]
w_N
\end{pmatrix}
=
\begin{pmatrix}
0\\*[1mm]
\bullet
\end{pmatrix}
\]
Because of $\det(A_N) = \det(S_N)\det(T_N)$ a non-singular discretization matrix $A_N$ implies a non-singular $S_N$
and, thus, a non-convergent trivial component $v_N=0$. Such a non-convergent zero $12$-component is what one gets, for example, if one applies the spectral
collocation method of \cite{Olver12} (with square contours replacing the circles) to the Riemann--Hilbert problems \eqref{eq:XRHP2} or to the model problem \eqref{eq:vRHP}. As a hint of failure, the resulting discrete system is ill-conditioned; details which can be found in the thesis of the fourth author G.W. \cite[Sect.~5.4]{Wechslberger}. 

The deeper structural reason for this problem can be seen in the fact that the Noether index of finite-dimensional square matrices is always zero,
whereas the index of the infinite-dimensional subproblem \eqref{eq:vRHP} is strictly positive.

We suggest two approaches to deal with this problem: first, an infinite-dimensional discretization using sequence spaces, that is, without truncation,
and  using infinite-dimensional numerical linear algebra, and second, using underdetermined discretizations with
rectangular linear systems that are complemented by a set of explicit compatibility conditions.

\section{RHPs as Integral Equations with Singular Kernels}

In this section, we recall a way to express the RHP \eqref{eq:XRHP2}, with standard normalization at infinity, as a particular system of singular integral equations, cf. \cite{RHPFact,DeiftZhou,Olver12}.
We introduce the Cauchy transform
\[
C f(z) = \frac{1}{2\pi i}\int_\Gamma \frac{f(\zeta)}{\zeta-z}\,d\zeta \qquad (z\not\in\Gamma)
\]
and their directional limits $C_\pm$ when approaching the left or right of the oriented contour $\Gamma$, defined by
\[
C_\pm f(\eta) = \lim_{z \to \eta_\pm} \frac{1}{2\pi i}\int_\Gamma \frac{f(\zeta)}{\zeta-z}\,d\zeta \qquad (\eta \in \Gamma).
\]
Note that $C_\pm$ can be extended as bounded linear operators mapping $L^2(\Gamma)$ (or spaces of Hölder continuous functions) into itself, 
and $C$ (suitably extended) maps such functions into functions that are holomorphic on $\C\setminus\Gamma$, see \cite[p.~100]{Fokas}. By using the decomposition $C_+ - C_- = \1$, the ansatz (by letting $C$ act component-wise on the matrix-valued function $u$)
\begin{subequations}\label{eq:RHPSIE}
\begin{equation}\label{eq:cauchyIntegral}
X(z) = I + C u(z),\qquad u \in L^2(\Gamma,\C^{2\times 2}),
\end{equation}
establishes the equivalence of a RHP of the form \eqref{eq:XRHP2} and the system of singular integral equations
\begin{equation}\label{eq:SIE}
(\1 - (G-I) C_-) u = G-I
\end{equation}
\end{subequations}
As the following theorem shows, singular integral operators of the form
\[
T_G = \1 - (G-I) C_- : L^2(\Gamma,\C^{2\times 2}) \to L^2(\Gamma,\C^{2\times 2})
\]
 can be preconditioned by operators of exactly the same form.

\begin{theorem}\label{thm:precond} 
Let $\Gamma$ be a smooth, bounded, and non-self intersecting\footnote{Points of self intersection are 
allowed if certain cyclic conditions are satisfied \cite{FokasZhou}: at such a point
the product of the corresponding parts of the jump matrix should be the
identity matrix. These conditions guarantee smoothness in the sense of \cite{Zhou}, where  
the analog of Theorem~\ref{thm:precond} is proved for the general smooth Riemann--Hilbert data.} contour system and $G:\Gamma \to \GL(2)$ a system of jump matrices
which continues analytically to a vicinity of $\Gamma$. Then,  $T_{G^{-1}}$ is a Fredholm regulator of $T_G$, that is, $T_{G^{-1}} T_G = \1 + K$
with a compact operator $K : L^2(\Gamma,\C^{2\times 2}) \to L^2(\Gamma,\C^{2\times 2})$  that can be represented as a regular integral operator.
\end{theorem}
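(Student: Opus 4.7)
The approach is direct: I would expand the composition $T_{G^{-1}} T_G$ and show that the part involving singular Cauchy principal values cancels algebraically, leaving only a regular integral operator. Writing $A = G-I$ and $B = G^{-1}-I$, a direct computation gives
\begin{equation*}
T_{G^{-1}} T_G\, u \;=\; u \,-\, (A+B)\,C_- u \,+\, B\, C_-\bigl(A\, C_- u\bigr).
\end{equation*}
The core of the argument is the matrix identity
\begin{equation*}
A + B + BA \;=\; (G-I) + (G^{-1}-I) + (G^{-1}-I)(G-I) \;=\; 0,
\end{equation*}
which merely reflects the group relation $G G^{-1} = I$ and is the structural reason why two singular integral operators with opposite Noether indices compose to a compact perturbation of the identity.

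To exploit this identity, I would pull the multiplier $A$ through $C_-$ using the standard commutator formula
\begin{equation*}
C_-(Av)(\eta) \;=\; A(\eta)\, C_- v(\eta) \,+\, \frac{1}{2\pi i}\int_\Gamma \frac{A(\zeta)-A(\eta)}{\zeta-\eta}\, v(\zeta)\, d\zeta,
\end{equation*}
together with the closed-contour relation $C_-^2 = -C_-$, a consequence of Plemelj--Sokhotski together with Cauchy's theorem applied to the holomorphic extensions of $Cv$ on both sides of $\Gamma$. Substituting and collecting terms, the three summands involving $C_-$ without commutators combine into $-(A+B+BA)\,C_- u = 0$, and what survives is
\begin{equation*}
T_{G^{-1}} T_G\, u \;=\; u \,+\, (G^{-1}-I)\,[\, C_-,\, G-I\,]\, C_- u \;=:\; u + Ku.
\end{equation*}

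It then remains to identify $K$ as a compact, regular integral operator. Because $G$ extends analytically to a vicinity of $\Gamma$, the kernel
\begin{equation*}
(\eta,\zeta) \;\longmapsto\; \frac{G(\zeta) - G(\eta)}{2\pi i\,(\zeta-\eta)}
\end{equation*}
of the commutator $[C_-, G-I]$ is smooth, in particular bounded, on the compact set $\Gamma \times \Gamma$, so the commutator is a Hilbert--Schmidt operator on $L^2(\Gamma, \C^{2\times 2})$; composing on the left with the bounded multiplication operator $G^{-1}-I$ and on the right with the bounded operator $C_-$ preserves both compactness and the representation as a regular (non-singular) integral operator. The main technical point of the proof is the careful justification of $C_-^2 = -C_-$ for the class of contours at hand (a disjoint union of smooth Jordan curves, oriented consistently as a cycle); once this closed-contour identity is in place, the rest reduces to the purely algebraic cancellation above together with a bounded-kernel estimate.
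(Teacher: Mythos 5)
Your expansion of $T_{G^{-1}}T_G$, the commutator formula, and the compactness argument for the smooth-kernel commutator are all fine, and the mechanism (symbol-type cancellation plus smoothing commutators) is the same one the paper exploits. The genuine gap is the identity $C_-^2=-C_-$, which you yourself flag as the main technical point: it is not valid for the class of contour systems in the theorem, and in particular not for the contour to which the paper actually applies the theorem. For the RHP \eqref{eq:XRHP2} the system $\Gamma=\Gamma_1\cup\Gamma_2$ consists of two small circles lying \emph{inside} a large circle, all positively oriented (Fig.~\ref{fig6}, right); this system does not bound a domain lying to its left, so $C_+$ and $-C_-$ are \emph{not} complementary projections. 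Concretely, take $u$ supported on the circle centered at $-1$ with $u(\zeta)=(\zeta+1)^{-1}$. Then $C_-u(\zeta)=-(\zeta+1)^{-1}$ on all three circles, and evaluating $C(C_-u)$ from the minus side of the outer circle $\Gamma_2$ picks up the pole at $-1$ twice (once from the small circle, once from $\Gamma_2$ itself), giving $C_-^2u(\eta)=2(\eta+1)^{-1}\neq -C_-u(\eta)=(\eta+1)^{-1}$ on $\Gamma_2$. Plemelj's $C_+-C_-=\1$ survives such configurations; the projection identity does not — indeed it is precisely to restore the ``boundary of $\Omega_+$'' structure that the paper reverses the orientation of the small circles later, in Fig.~\ref{fig9}. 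Your caveat ``oriented consistently as a cycle'' therefore excludes the very situation the theorem is used for in Section~\ref{sect:spectral}.

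The gap is fixable, and fixing it essentially lands you on the paper's proof. Your cancellation only needs $C_-^2+C_-$ to be \emph{compact}, not zero: modulo compact operators $C_-$ is block diagonal with respect to the connected components of $\Gamma$ (the pieces of the Cauchy kernel coupling distinct components are analytic, hence smoothing), and on each single closed curve the projection identity holds exactly, so $C_-^2+C_-=C_+C_-$ is compact for any finite union of disjoint smooth closed curves, independently of orientation and nesting. With that replacement your computation reads $T_{G^{-1}}T_Gu=u-(A+B+BA)C_-u+BA\,(C_-^2+C_-)u+B\,[C_-,A]\,C_-u=u+Ku$ with $K$ compact, which is the assertion (a short extra remark is needed if you also want $K$ represented by a regular kernel, using that all kernels involved extend analytically). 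The paper avoids the issue from the start by writing $T_G=\tfrac12(I+G)\1+\tfrac12(I-G)H$ via Sokhotski--Plemelj and invoking Muskhelishvili's composition formula (a consequence of the Poincar\'e--Bertrand formula), i.e., a symbol calculus valid only modulo regular integral operators — exactly the ``modulo compacts'' version of your argument, which is why it never requires the exact projection identity.
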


\begin{proof} The Sokhotski--Plemelj formula \cite[Eq. (17.2)]{Muskhelishvili} gives that $2C_- = -\1 + H$, where $H$ denotes a variant of the Hilbert transform (normalized as in \cite{Muskhelishvili}),
\[
H f(\zeta) = \frac{1}{\pi i} \int_\Gamma \frac{f(\eta)}{\eta - \zeta}\,d\eta\qquad (\zeta \in \Gamma),
\]
with the integral understood in the sense of principle values. This way, we have
\begin{align*}
T_G &= A_1 \1 + B_1 H,\qquad A_1 = \frac{1}{2} (I+G),\quad B_1 = \frac{1}{2} (I-G),\\*[2mm]
T_{G^{-1}} &= A_2 \1 + B_2 H,\qquad A_2= \frac{1}{2} (I+G^{-1}),\quad B_2 = \frac{1}{2} (I-G^{-1}).
\end{align*}
By a product formula of Muskhelishvili  \cite[Eq. (130.15)]{Muskhelishvili}, which directly follows from the Poincaré--Betrand formula \cite[Eq. (23.8)]{Muskhelishvili}, one has
\[
T_{G^{-1}} T_G = A \1 + B H + K,
\]
where $K$ represents a {\em regular} integral operator and the coefficient matrices $A$ and $B$ are given by the expressions
\[
A = A_2 A_1 + B_2 B_1,\qquad B = A_2 B_1 + B_2 A_1.
\]
Here, we thus obtain $A=I$ and $B=0$, which finally proves the assertion.\qed
\end{proof}

This theorem implies that the operator $T_{G}$ is {\em Fredholm}, that is, its nullity and
deficiency are {\em finite}.  In fact, since in our examples
$\det G \equiv 1$, we have that the Noether index of $T_G$ is zero. The possibility  to use the  Fredholm theory is extremely important in studying RHPs: it allows one to use, when
proving the solvability of Riemann-Hilbert problems, the ``vanishing lemma'' \cite{Zhou}, see also \cite[Chap.~5]{Fokas}.
For the use of Fredholm regulators in iterative methods applied to solving singular integral equations, see \cite{MR3348205}.

\section{A Well-Conditioned Spectral Method for Closed Contours}\label{sect:spectral} 

We follow the ideas of Olver and Townsend \cite{Spectral} on spectral methods for differential
equations, recently extended by Olver and Slevinsky \cite{SIEpaper} to singular integral equations.
First, the solution $u$ and the data $G-I$ of the singular integral equation \eqref{eq:SIE} 
are expanded\footnote{It is actually implemented this way in {\tt SingularIntegralEquations.jl}, a {\sc Julia} software package  described in \cite{SIEpaper}.} in the Laurent bases of the circles that built up the cycle $\Gamma$.
Next, the resulting  linear 
system is solved using the framework of {\em infinite-dimensional} linear
algebra \cite{NLA,PracticalFramework}, built out of the adaptive QR factorization introduced in \cite{Spectral}. 

To be specific, we describe
the details for the model RHP \eqref{eq:model}, where the cycle $\Gamma$ is just the unit circle. Here, we have
the expansions
\[
u(\zeta) = \sum_{k=-\infty}^\infty U_k \zeta^k,\qquad G(\zeta)-I = \sum_{k=-\infty}^\infty A_k \zeta^k\qquad (\zeta\in \Gamma),
\]
both rapidly decaying with $2\times 2$ coefficient matrices $U_k$ and $A_k$.  In the Laurent basis, the
operator $C_-$ acts diagonally in the simple form
\[
C_- \zeta^k 
=\begin{cases}
0 & \quad k \geq 0,\\*[2mm]
- \zeta^k & \quad k < 0.
\end{cases}
\]
which gives
\[
C_- u(\zeta) = - \sum_{k=1}^\infty U_{-k} \zeta^{-k}\qquad (\zeta\in\Gamma).
\]
Note that $-C_-$ acts as a projection to the subspace spanned by the basis elements with negative index. 
This way, the system \eqref{eq:SIE} of singular integral equations is transformed to\footnote{We use the
Iverson bracket of a condition: $[{\mathcal P}] = 1$ if the predicate ${\mathcal P}$ is true,  $[{\mathcal P}] = 0$ otherwise.}
\begin{equation}\label{eq:SIEdiscr}
U_k + \sum_{j=-\infty}^{\infty}  [k-j<0]\, A_{j} U_{k-j}= A_k \qquad (k\in\Z).
\end{equation}

Up to a given accuracy, we may assume that the data is given as a finite sum,
\[
G(\zeta)-I \approx \sum_{k=-n_1}^{n_1} A_k \zeta^k,
\]
likewise for $G^{-1}(\zeta) - I$ with a truncation at $n_2$. Thus, writing the discrete system \eqref{eq:SIEdiscr}
in matrix-vector form, the corresponding double-infinite matrix has a bandwidth of order $O(n_1)$. Preconditioning
this system, following Theorem~\ref{thm:precond}, by the multiplication with 
the double-infinite matrix belonging to $G^{-1}$ instead of $G$, results in a double-infinite matrix that has a
 bandwidth of order $O(n_1+n_2)$. Since the right hand side of \eqref{eq:SIEdiscr} is truncated at indices of magnitude $O(n_1)$,
application of  the adaptive QR factorization \cite{Spectral}, after re-ordering the double-infinite coefficients as $U_0,U_{-1},U_1,\ldots$ in order to be singly infinite, will result in an algorithm that has a complexity
of order $O((n_1+n_2)^2 n_3)$, where $n_3$ is the number of coefficients needed to resolve $u$, dictated by a specified tolerance.   

\begin{remark} The extension to systems $\Gamma$ of closed contours built from several circles is straightforward.
The jump data and the solution, restricted to a circle centered at $a$ are expanded in the Laurent basis $(z-a)^k$, $k\in\Z$.
When instead evaluated at a circle centered at $b$, a change of basis is straightforwardly computed using
\[
(z-a)^j = \sum_{k=0}^\infty \binom{j}{k} (b-a)^{j-k}(z-b)^k\qquad (j\in\Z),
\]
valid for $|z-b| < |b-a|$. Because of a geometric decay, one can truncate those series at $k = O(1)$ as long as
$|z-b| < \theta |b-a|$ with $0<\theta < 1$ small enough.  The adaptive QR factorization can then be applied by interlacing the Laurent coefficients on each circle to obtain a singly infinite unknown vector of coefficients.
\end{remark}

\subsubsection*{Numerical Example 1: Model problem}

Because of the entries $\zeta^m$ and $\zeta^{-m}$ in the jump matrix of the model problem \eqref{eq:model}, 
we have that $n_1, n_2, n_3 = O(m)$ in order to resolve the data and the solution; hence the computational complexity of the method scales as $O(m^3)$. Using the {\sc Julia} software package {\tt SingularIntegralEquations.jl}\footnote{{\tt https://github.com/ApproxFun/SingularIntegralEquations.jl}, cf. \cite{SIEpaper}.} (v0.0.1) the problem is numerically solved by
the following short code showing that the user has to do little more than just providing the data and
entering the singular integral equation \eqref{eq:SIE} as a mathematical expression: 

\begin{lstlisting}[numbers=left,basicstyle=\ttfamily\footnotesize,mathescape=true]
using ApproxFun, SingularIntegralEquations

m = 100
$\Gamma$ = Circle(0.0,1.0)
G = Fun(z -> [z^m 0; exp(z) 1/z^m],$\Gamma$)
C = Cauchy(-1)
@time u = (I-(G-I)*C)\(G-I)
Y = z -> I+cauchy(u,z)
err = norm(Y(0)-[0 -1; 1 (-1)^m*exp(-lfact(m))],2)
\end{lstlisting}
The run time\footnote{Using a MacBook Pro with a 3.0 GHz Intel Core i7-4578U processor and 16 GB of RAM.} 
is 2.8 seconds, the
error of $Y(0)$ is $4.22\cdot 10^{-15}$ (spectral norm), which corresponds to a loss of one digit in absolute error.

\subsubsection*{Numerical Example 2: Riemann--Hilbert Problem for the Discrete $Z^{2/3}$}

Now, we apply the method to the Riemann--Hilbert problem \eqref{eq:XRHP2} encoding the discrete $Z^a$ map. 
Here, because of the exponents $-m$, $-n$ and $\pm (n+m)/2$ in \eqref{eq:XRHP}, we have $n_1, n_2, n_3 = O(n+m)$ in order to resolve the data and the solution, see Fig.~\ref{fig8}; hence the computational complexity scales as $O((n+m)^3)$.
Note that this is far from optimal, using the stabilized recursion of Section~\ref{sect:stable} to
compute a table including $Z_{n,m}^a$ would give a complexity of order $O((n+m)^2)$. Once more, however,
the code requires little more than typing the mathematical equations of the RHP.

\begin{lstlisting}[numbers=left,basicstyle=\ttfamily\footnotesize,mathescape=true]
using ApproxFun, SingularIntegralEquations

a = 2/3
n = 6; m = 8; # n+m must be even
pow = z -> exp(1im*a*pi/4)*exp(-a/2*log(z/1im))
$\Gamma$ = Circle(-1.0,0.3) $\cup$ Circle(+1.0,0.3) $\cup$ Circle(0.0,3.0) 
G = Fun(z -> in(z,$\Gamma$[3])?[z^((m+n)/2) 0; 0 1/z^((n+m)/2)]:[1 0; pow(z)/(z-1)^m/(z+1)^n 1],$\Gamma$)
C = Cauchy(-1)
@time u = (I-(G-I)*C)\(G-I)
X = z -> I+cauchy(u,z);
X0 = X(0)
Za0 = (-1)^(m+1)*X0[2,1]/X0[1,1]
Za1 = 3.610326860525178 + 2.568086087959661im # exact solution from the recursion as in Section 1.2 using bigfloats
err = abs(Za0 - Za1) 
\end{lstlisting}
The run time is 2.7 seconds, the absolute
error of $Z^{2/3}_{6,8}$ is $3.38\cdot 10^{-8}$, which corresponds to a loss of about 7 digits. This loss
of accuracy can be explained by comparing the magnitude of the $21$-component of $u$ as shown in Fig.~\ref{fig8}, along the
two black circles of Fig.~\ref{fig6}, with that of the corresponding component of the
solution matrix at $z=0$, namely, 
\[
X(0) \approx 
\begin{pmatrix}
-3.38121       &        -12.2073+8.68324i \\
           12.2073+8.68324i &  66.0758
\end{pmatrix}.
\]
We observe that during the evaluation of the Cauchy transform \eqref{eq:cauchyIntegral}, which maps $u\mapsto X(0)$
by means of an integral, at least 5 digits must have been lost by cancellation---a loss, which structurally cannot be avoided for oscillatory
integrands with {\em large} amplitudes. (Note that this is not an issue of frequency: just one oscillation with a large
amplitude suffices to get such a severe cancellation.) 

\begin{figure}[tbp]
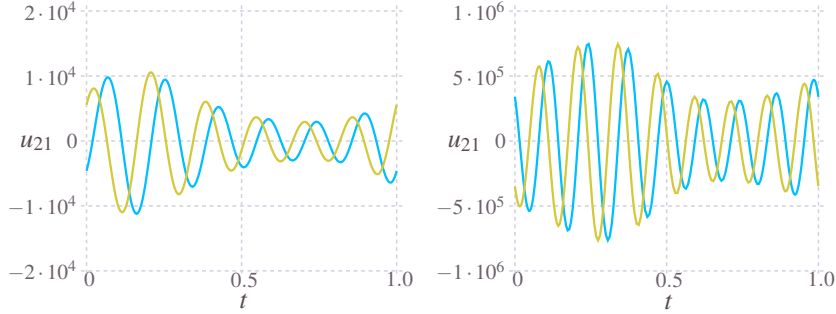

\centering
\begin{minipage}{0.45\textwidth}
\input{Images/circle_m1}
\end{minipage}\hfil
\begin{minipage}{0.45\textwidth}
\input{Images/circle_p1}
\end{minipage}
  \caption{Left: $u_{21}(\zeta)$ on the circle $\zeta = -1 + 0.3 e^{2\pi i t}$; right: $u_{21}(\zeta)$ on $\zeta = +1 + 0.3 e^{2\pi i t}$. The real parts are shown in blue, the imaginary part in yellow. Note that there are $m=6$ oscillations
  on the left and $n=8$ oscillations on the right; the maximum amplitude is about $1.1\cdot 10^4$ on the left and $7.5\cdot 10^5$ on the right.}\label{fig8}
 \end{figure}
 
Since the amplitudes of $u_{21}$ grow exponentially with $n$ and $m$, the algorithm for computing $Z^a_{n,m}$ based on the numerical evaluation of
\eqref{eq:RHPSIE} applied to the RHP \eqref{eq:XRHP2} is {\em numerically unstable}. Even though the initial step,
the spectral method in coefficient space applied to \eqref{eq:SIE} is perfectly stable, stability is destructed by
the bad conditioning of the post-processing step, that is, the evaluation of the integral in \eqref{eq:cauchyIntegral}. 
We refer to \cite{stability} for an analysis that algorithms with a badly conditioned post processing of
intermediate solutions are generally prone to numerical instability.
   
\section{RHPs as Integral Equations with Nonsingular Kernels}

By reversing the orientation of the two small circles in the RHP \eqref{eq:XRHP2}, and by simultaneously replacing the
jump matrix $G_1$ by $\tilde G_1 = G_1^{-1}$, the RHP is transformed to an equivalent one with a contour system $\Sigma$
that satisfies the following properties, see Fig.~\ref{fig9}: it is a union of non-self intersecting smooth curves, that 
bound a domain $\Omega_+$ to its left. By $\Omega_-$ we will denote the (generally not connected) region which is the
complement of $\Omega_+ \cup \Gamma$. Note that the model problem \eqref{eq:model} falls into that class of contours without any
further transformation.

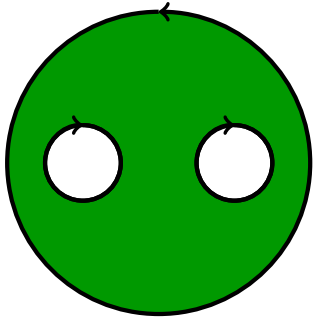
\begin{figure}[tbp]
\centering
	\begin{tikzpicture}[scale=2, transform shape,rhcontour/.style={->,>=to,ultra thick}]
   		\draw[rhcontour,fill=darkgreen,even odd rule] (-0.5,0) arc[radius=0.25, start angle=90, end angle=450] 
		     (0.5,0) arc[radius=0.25, start angle=90, end angle=450] 
		     (0,0.75) arc[radius=1, start angle=90, end angle=450]; 
   		\draw[rhcontour] (0.5,0) arc[radius=0.25, start angle=90, end angle=-270];
   		\draw[rhcontour]	(-0.5,0) arc[radius=0.25, start angle=90, end angle=-270];
	\end{tikzpicture}
\caption{A modified, but equivalent, contour system $\Sigma$ for the for the RHP \eqref{eq:XRHP2} obtained
by reversing the orientation and by simultaneously replacing the jump matrix $G_1$ by $G_1^{-1}$. Now, there is a bounded domain $\Omega_+$ (marked in green) to the left of $\Sigma$, cf. the original system shown in Fig.~\ref{fig6}.}\label{fig9}
\end{figure}

We drop the tilde from the jump matrices and consider RHPs  of the form
\begin{equation}\label{eq:PhiRHP}
\Phi_+(\zeta) = G(\zeta) \Phi_-(\zeta) \quad (\zeta \in \Sigma),\qquad \Phi(z) = I + O(z^{-1})\quad (z\to \infty),
\end{equation}
on such contours systems $\Sigma$.
It will either represent the aforementioned transformation of \eqref{eq:XRHP2} or the model problem \eqref{eq:model}.
In particular, $G$ is lower triangular and can be analytically continued to a vicinity of $\Sigma$.

The classical theory developed by Plemelj (see \cite[§126]{Muskhelishvili}) for such problems 
teaches the following: the directed boundary values $\Phi_-$ of the unique analytic solution 
$\Phi: \C\setminus\Sigma \to \GL(2)$ of the RHP \eqref{eq:PhiRHP} satisfy a Fredholm integral equation \cite[Eq.~(126.5)]{Muskhelishvili} of the second kind on $\Sigma$, namely 
\begin{equation}\label{eq:fred}
\Phi_-(\zeta) - \frac{1}{2\pi i}\int_\Sigma \frac{G^{-1}(\zeta) G(\eta) - I}{\eta - \zeta} \Phi_-(\eta)\,d\eta = I\qquad (\zeta\in\Sigma),
\end{equation}
understood here as an integral equation in $L^2(\Sigma,\C^{2\times 2})$.
The matrix kernel of this equation, that is,
\[
K(\zeta,\eta) = \frac{G^{-1}(\zeta) G(\eta) - I}{\eta - \zeta} = G^{-1}(\zeta)\frac{ G(\eta) - G(\zeta)}{\eta - \zeta},
\]
is smooth on $\Sigma\times \Sigma$, since it extends as an analytic function and since the singularity at $\zeta=\eta$ is removable. Integral equations of the form \eqref{eq:fred} with a smooth kernel are, in principle, amenable to fast quadrature
based methods, see the next section. 

We note that, given the boundary values $\Phi_-(\zeta)$ for $\zeta \in \Sigma$, the solution of the RHP \eqref{eq:PhiRHP}
can be reconstructed by
\begin{equation}
\Phi(z) = \begin{cases}
I - \dfrac{1}{2\pi i} \displaystyle\int_\Sigma \dfrac{\Phi_-(\zeta)}{\zeta-z}\,d\zeta &\qquad z \in \Omega_-,\\*[5mm]
\dfrac{1}{2\pi i} \displaystyle\int_\Sigma \dfrac{G(\zeta)\Phi_-(\zeta)}{\zeta-z}\,d\zeta&\qquad z \in \Omega_+.
\end{cases}
\end{equation}

In general, however, the Fredholm equation \eqref{eq:fred} is not equivalent to the RHP, see \cite[p.~387]{Muskhelishvili}: the Fredholm equation has but a kernel of the same dimension as the
 kernel of the {\em associated} homogeneous RHP, defined as
\begin{equation}\label{eq:PsiRHP}
\Psi_+(\zeta) = G^{-1}(\zeta)  \Psi_-(\zeta)\quad (\zeta \in \Sigma),\qquad \Psi(z) = O(z^{-1})\quad (z\to \infty).
\end{equation}
As we will show now, the kernel of the associated RHP is {\em nontrivial} in the examples studied in this work.

First, we observe, by the lower triangular form of $G$, that the $11$- and the $12$-components of $\Psi$ both satisfy a
scalar RHP of the form \eqref{eq:vRHP} with a jump function $g$ that has a winding number which is
\[
\ind_\Sigma g = - \frac{n+m}{2} 
\]
for the discrete map $Z^a$, and which is  $\ind_\Sigma g = -m$ for the model problem \eqref{eq:model}. Note that this
winding number has the sign opposite to the results of Section~\ref{sect:triang}
since the underlying $2\times 2$ RHP is based on $G^{-1}$ instead of $G$. Hence, the nullity of the scalar RHPs for
the  $11$- and the $12$-components of $\Psi$ is zero and the deficiency is $(n+m)/2$ ($m$ in case of the model problem).
As a consequence, the $11$- and the $12$-components of $\Psi$ must both be identically zero.

Next, since we now know that $\Psi$ has a zero first row, also the $21$- and $22$-components of $\Psi$ satisfy a
scalar RHP of the form \eqref{eq:vRHP} each, but with a jump function $g$ that has the positive winding number 
\[
\ind_\Sigma g =  \frac{n+m}{2} 
\]
for the discrete map $Z^a$, and $\ind_\Sigma g = m$ for the model problem \eqref{eq:model}, just as discussed in Section~\ref{sect:triang}. Hence, the deficiency of the scalar RHPs for
the  $21$- and the $22$-components of $\Psi$ is zero and the nullity is $(n+m)/2$ ($m$ in case of the model problem).
Since both components are linearly independent from of each other, we have thus proven the following lemma.

\begin{lemma}\label{lem:1} The nullity of the associated homogeneous RHP \eqref{eq:PsiRHP}, and hence, that of the Fredholm integral equation 
\eqref{eq:fred} is $n+m$ in the case of the discrete map $Z^a$ and $2m$ in the case of the model problem \eqref{eq:model}. 
\end{lemma}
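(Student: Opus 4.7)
The plan is to split the proof into two clean steps: an appeal to a general equivalence and a direct application of the scalar theory recalled in Section~\ref{sect:triang}. The paragraphs preceding the lemma statement already contain the core computation; the task is to organize it into a self-contained proof and verify the sign/orientation bookkeeping.

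First I would invoke Muskhelishvili's classical result cited above the lemma, namely that the nullity of the Fredholm integral equation \eqref{eq:fred} equals the nullity of the associated homogeneous RHP \eqref{eq:PsiRHP}. This reduces the claim to computing $\dim\ker$ of \eqref{eq:PsiRHP}. Next, I would exploit the lower-triangular structure of $G$. Writing $G=\begin{pmatrix} a & 0\\ b & c\end{pmatrix}$ on $\Sigma$, the condition $\det G=1$ gives $c=a^{-1}$ and $G^{-1}=\begin{pmatrix} c & 0\\ -b & a\end{pmatrix}$, which is again lower triangular. The jump condition $\Psi_+=G^{-1}\Psi_-$ therefore decouples row-wise: for $j=1,2$ one has $(\Psi_{1j})_+ = c\,(\Psi_{1j})_-$ and $(\Psi_{2j})_+ = -b\,(\Psi_{1j})_- + a\,(\Psi_{2j})_-$, both with normalization $O(z^{-1})$ at infinity.

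I would then compute the winding numbers. On $\Sigma$ (with the small circles reversed and $G_1$ replaced by $G_1^{-1}$), the entries $a$ and $c$ are identically $1$ on the small circles; on $\Gamma_2$ one has $a(\zeta)=\zeta^{(n+m)/2}$ and $c(\zeta)=\zeta^{-(n+m)/2}$, whence $\ind_\Sigma c = -(n+m)/2$ and $\ind_\Sigma a = +(n+m)/2$. For the model problem \eqref{eq:model} the analogous computation gives $\ind c = -m$ and $\ind a = +m$. Applying the scalar Noether theory from Section~\ref{sect:triang} to the first-row equations, the negative winding number forces nullity zero, so $\Psi_{11}\equiv\Psi_{12}\equiv 0$. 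The second-row equations then simplify to the homogeneous scalar RHPs $(\Psi_{2j})_+ = a\,(\Psi_{2j})_-$ with positive winding number; each column contributes a solution space of dimension $(n+m)/2$ in the $Z^a$ case and $m$ in the model case. Since the two columns $j=1,2$ are independent degrees of freedom in $\ker$, the total nullity is $2\cdot (n+m)/2=n+m$ and $2m$, respectively.

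The main obstacle is really just the careful sign and orientation bookkeeping on $\Sigma$: one has to check that after reversing the small circles and replacing $G_1$ by $G_1^{-1}$ the diagonal entries become $1$ there (so those circles contribute zero to $\ind$), while $\Gamma_2$ is still traversed counterclockwise and contributes $\pm(n+m)/2$ to $\ind a$ and $\ind c$ with the correct signs. Everything else is a direct appeal to results already stated: Muskhelishvili's equivalence for the nullity and the scalar index formula from Section~\ref{sect:triang}.
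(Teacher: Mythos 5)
Your proposal is correct and follows essentially the same route as the paper: reduce to the associated homogeneous RHP via Muskhelishvili's equivalence of nullities, use the lower-triangular structure of $G^{-1}$ to decouple the rows, kill the first row by the negative index $-(n+m)/2$ (resp.\ $-m$), and then read off nullity $(n+m)/2$ (resp.\ $m$) for each of the two independent second-row components. Your explicit sign/orientation bookkeeping on $\Sigma$ and the remark that the two columns are independent degrees of freedom merely make explicit what the paper states more tersely.
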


\begin{example} For the model RHP \eqref{eq:model} the smooth kernel of the Fredholm integral equation \eqref{eq:fred} can be constructed explicitly. Here we have
\[
K(\zeta,\eta) = \frac{G^{-1}(\zeta) G(\eta) - I}{\eta - \zeta} = 
\begin{pmatrix}
\frac{(\eta/\zeta)^m-1}{\eta-\zeta} & \;0 \\*[2mm]
\frac{e^\eta \zeta^m - e^\zeta \eta^m}{\eta-\zeta} & \;\frac{(\zeta/\eta)^m-1}{\eta-\zeta}
\end{pmatrix}.
\]
A column of a matrix belonging to the kernel of \eqref{eq:fred} satisfies the equation
\begin{equation}\label{eq:kernel}
\begin{pmatrix}
u_-(\zeta) \\
w_-(\zeta) 
\end{pmatrix} = \frac{1}{2\pi i} \int_\Sigma K(\zeta,\eta)  
\begin{pmatrix}
u_-(\eta) \\
w_-(\eta) 
\end{pmatrix}\,d\eta\qquad (\zeta\in\Sigma),
\end{equation}
where $\Sigma$ is the positively oriented unit circle. We will construct solutions that extend analytically as $u_-(z)$ and $w_-(z)$ for $z\neq 0$, such that
\[
 \begin{pmatrix}
u_-(z) \\
w_-(z) 
\end{pmatrix} = \res_{\eta = 0 } K(z,\eta)  
\begin{pmatrix}
u_-(\eta) \\
w_-(\eta) 
\end{pmatrix}\qquad (z \neq 0).
\]
\end{example}
By recalling the notation introduced in \eqref{eq:exptrunc}) we observe, for $k=0,\ldots,m-1$, that
\begin{align*}
\res_{\eta=0}K_{11}(z,\eta) \eta^{k-m} &= z^{k-m}, \\*[2mm]
\res_{\eta=0}K_{21}(z,\eta) \eta^{k-m} &= -z^k e_{m-k}(z),\\*[2mm]
\res_{\eta=0}K_{22}(z,\eta) \eta^k &= -z^k.
\end{align*}
Using the coefficients $a_{jk}^{(m)}$ that induce a change of polynomial basis by
\[
z^k = \sum_{k=0}^{m-1} a_{kj}^{(m)} z^j e_{m-j}(z)\qquad (k=0,\ldots,m-1),
\]
we define the polynomials
\[
p_k^{(m)}(z) = \sum_{j=0}^{m-1} a_{kj}^{(m)} z^j \qquad (k=0,\ldots,m-1),
\]
each of which has degree at most $m-1$.
Then, the $m$ linear independent vectors
\begin{equation}\label{eq:kercol}
\begin{pmatrix}
u_-(\zeta) \\
w_-(\zeta) 
\end{pmatrix} = 
\begin{pmatrix}
-2 \zeta^{-m} p_k^{(m)}(\zeta) \\*[2mm]
 \zeta^k
\end{pmatrix}\qquad (k=0,\ldots,m-1)
\end{equation}
are solutions of \eqref{eq:kernel} each. Thus, since its dimension is $2m$ by Lemma~\ref{lem:1}, the kernel of the integral
equation \eqref{eq:fred} is spanned by the $2\times 2$ matrices whose columns are linear
combinations of these vectors. 
\qed

The unique solution $\Phi_-$ of the RHP \eqref{eq:PhiRHP} can be picked among the solutions of \eqref{eq:fred} by imposing 
additional linear conditions, namely $n+m$ independent such 
conditions in the case of the discrete map $Z^a$ and $2m$ in the case of the model problem. 
Specifically, for the model problem \eqref{eq:model}, we obtain such conditions as follows. First, since $\Phi_-(z)$ continues analytically to
$|z|>1$ and since $\Phi_-(z) = I + O(z^{-1})$ as $z\to \infty$, we get by Cauchy's formula for the Laurent coefficients at $z=\infty$ that
\[
\frac{1}{2\pi i} \int_\Sigma \Phi_-(\zeta)\frac{d\zeta}{\zeta^k} = [k=1]\cdot I\qquad (k=1,2,\ldots).
\]
Second, by restricting this relation to the {\em second row} of the matrix $\Phi_-$ for $k=1,\ldots,m$, we get
the conditions
\begin{equation}\label{eq:conditions}
\frac{1}{2\pi i} \int_\Sigma 
\begin{pmatrix}
0 &\;\; 1 
\end{pmatrix} \cdot \Phi_{-}(\zeta)\,\frac{d\zeta}{\zeta^k} = 
\begin{pmatrix}
0\;\; & [k=1] 
\end{pmatrix}\qquad (k=1,\ldots,m).
\end{equation}
In fact, these conditions
force all the components of the columns \eqref{eq:kercol} that would span an offset from the kernel of \eqref{eq:fred} to be zero.

For the $Z^a$-RHP, similar arguments prove that the kernel of \eqref{eq:PhiRHP} is spanned by matrices
whose second row extends to polynomials of degree smaller than $(n+m)/2$  to the outside of the outer circle in
in Fig.~\ref{fig9}.  Thus, the same form of conditions as in \eqref{eq:conditions} can be applied for
picking the proper solution $\Phi_-(\zeta)$, except that one would have to replace
$\Sigma$ by that outer circle and the upper index $m$ by $(n+m)/2$.

\section{A Modified Nyström Method}\label{sect:ny}

Fredholm integral equations of the second kind with smooth kernels defined on a system of circular contours
are best discretized by the classical Nyström method \cite[§12.2]{Kress}. Here, one uses the composite trapezoidal rule as the underlying quadrature formula, that is,
\[
\frac{1}{2\pi i}\int_{\partial B_r(z_0)} f(z)\,dz \approx \frac{r}{N} \sum_{j=0}^{N-1} f\left(z_0 + r e^{2\pi i j/N}\right) e^{2\pi i j/N}.
\]
For integrands that extend analytically to a vicinity of the contour, this quadrature formula is spectrally accurate,
see, e.g., \cite[§2]{FoCM11} or \cite[§2]{SIREV}.

Since the Fredholm integral equation \eqref{eq:fred} has a positive nullity, applying the Nyström method to it will yield, for $N$ large enough, a numerically singular
linear system. However, the theory of the last section suggests a simple modification of the Nyström method: we use the conditions \eqref{eq:conditions} (after approximating them by the same quadrature formula as for the Nyström method) as
additional equations and solve the resulting {\em overdetermined} linear system by the least squares method. 

\subsubsection*{Numerical Example 1: Model problem}

We apply the modified Nyström method to the Fredholm integral equation representing the model problem \eqref{eq:model}. By the sampling condition, see, e.g., \cite[§2]{FoCM11}, the number $N$ of quadrature points will scale as $N = O(m)$, hence the computational complexity scales as $O(m^3)$.
To check the accuracy we compare with
\[
Y(0) = \dfrac{1}{2\pi i} \displaystyle\int_\Gamma G(\zeta)\Phi_-(\zeta)\dfrac{d\zeta}{\zeta}, \qquad
I = \dfrac{1}{2\pi i} \displaystyle\int_\Gamma\Phi_-(\zeta)\dfrac{d\zeta}{\zeta},
\]
evaluated by the same quadrature formula as for the Nyström method. For the particular parameters $m=100$ and $N=140$ we get,
within a run-time of 0.49 seconds for a straightforward Matlab implementation, 
a maximum error of these two quantities, measured in $2$-norm, of $1.33\cdot 10^{-14}$. The condition number
of the least squares matrix grows just moderately with $m$: it is about $23$ for $m=1$ and about $650$ for $m=1000$.

\begin{figure}[tbp]
\centering
\setlength\figurewidth{0.575\textwidth}
%
\begin{tikzpicture}

\begin{axis}[%
width=0.950920245398773\figurewidth,
height=0.75\figurewidth,
at={(0\figurewidth,0\figurewidth)},
scale only axis,
separate axis lines,
every outer x axis line/.append style={white!15!black},
every x tick label/.append style={font=\color{white!15!black}},
xmin=10,
xmax=50,
xlabel={$N_0$},
xmajorgrids,
every outer y axis line/.append style={white!15!black},
every y tick label/.append style={font=\color{white!15!black}},
ymode=log,
ymin=1e-12,
ymax=1,
yminorticks=true,
ylabel={error},
ymajorgrids,
yminorgrids
]
\addplot [color=blue,line width=0.8pt,mark size=1.7pt,only marks,mark=*,mark options={solid},forget plot]
  table[row sep=crcr]{%
10	0.2744049409775\\
11	0.255969261004664\\
12	0.166633454658061\\
13	0.184634631201704\\
14	0.101444737617087\\
15	0.0409410780759812\\
16	0.0868474685789512\\
17	0.0945038685668819\\
18	0.112287570143139\\
19	0.0938726910280181\\
20	0.130128360132468\\
21	0.0509729279280376\\
22	0.0396670440171682\\
23	0.0141935615823551\\
24	0.00705249341268218\\
25	0.00296026214605477\\
26	0.00130229643602803\\
27	0.000558548988336194\\
28	0.000236144240810048\\
29	0.000100188546897249\\
30	4.15008650580273e-05\\
31	1.73356831025208e-05\\
32	7.07354400476655e-06\\
33	2.91237078168409e-06\\
34	1.17267535610634e-06\\
35	4.76154717586308e-07\\
36	1.89538695323033e-07\\
37	7.61508312791499e-08\\
38	3.02081048848887e-08\\
39	1.16311497866717e-08\\
40	4.33050637489334e-09\\
41	1.43261804054282e-09\\
42	1.13229983364233e-09\\
43	1.30255741508167e-10\\
44	4.97007402825224e-10\\
45	2.02035484690409e-10\\
46	4.95972142825115e-10\\
47	4.19822377996717e-10\\
48	8.2980784153681e-11\\
49	2.80294016864763e-10\\
50	6.96649832277218e-10\\
};
\end{axis}
\end{tikzpicture}%
\caption{Absolute error of the approximation of $Z^{2/3}_{6,8}$ by the modified Nyström method vs. the number
of quadrature points $N_0$ on each of the three circles in Fig.~\ref{fig9} (the radii  are $1/2$ for the inner circles, $3$ for the outer one); the total number of quadrature points is then $N=3\times N_0$. One observes, after a threshold caused by a sampling condition, exponential (i.e., spectral) convergence that
saturates at a level of numerical noise at an error of about $10^{-9}$. Run time of a Mathematica implementation with $N_0=42$ is about 0.15 seconds.}\label{fig10}
\end{figure}
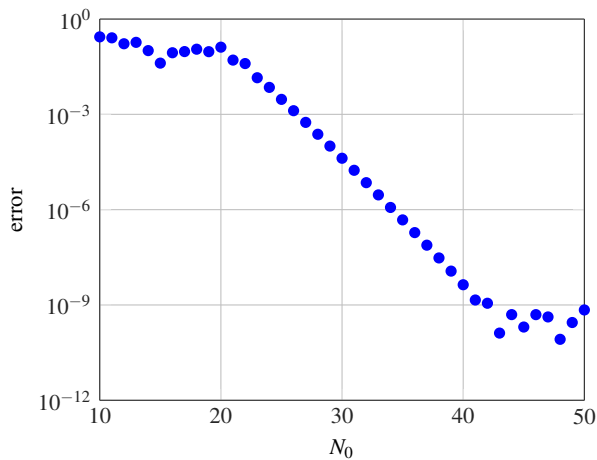

\subsubsection*{Numerical Example 2: Discrete $Z^{2/3}$}

Now, we apply the modified Nyström method to the Fredholm integral equation representing the RHP \eqref{eq:XRHP2} subject
to a transformation to the form \eqref{eq:PhiRHP}. Here, the sampling condition requires $N = O(n+m)$, hence the
computational complexity scales as $O((n+m)^3)$. For $Z_{6,8}^{2/3}$, the modified Nyström method yields the convergence plot shown in Fig.~\ref{fig10}: it exhibits exponential (i.e., spectral) convergence until
a noise level of about $10^{-9}$ is reached, which corresponds to a loss of about 6 digits. The reason for this loss is that this method for approximating the discrete $Z^a$ 
suffers the same issue with a bad conditioning of the post-processing step, that is, of 
\[
\Phi_-(\cdot) \mapsto X(0) = \dfrac{1}{2\pi i} \displaystyle\int_\Gamma G(\zeta)\Phi_-(\zeta)\dfrac{d\zeta}{\zeta},
\]
as the spectral method for the singular integral equation discussed in Section~\ref{sect:spectral}. Here, the amplitude
of the real and imaginary part of $\Phi_-(\zeta)$ along the two inner circles is of the order $10^4$ which causes 
a cancellation of at least 4 significant digits.

\section{Conclusion}

To summarize, there are two fundamental options for the stable numerical evaluation of the discrete map $Z^a_{n,m}$. 

\begin{itemize}
\item Computing all the values of the array $1\leq n,m\leq N$ at once by, first, computing the diagonal using a boundary value
solve for the discrete Painlevé II equation~\eqref{eq:xn} and, then, by recursing from the diagonal to the boundary using
the discrete differential equation \eqref{eq2}. This approach  has optimal complexity $O(N^2)$.
\item Computing just a single value for a given index pair $(n,m)$ by using the RHP~\eqref{eq:XRHP2} and one of the methods
discussed in Sections~\ref{sect:spectral} or \ref{sect:ny}. Since both methods suffer from an instability caused by
a post-processing quadrature for larger values of $n$ and $m$, one would rather mix this approach with the asymptotics \eqref{eq:asympt}.
For instance, using the numerical schemes for $n,m \leq 10$, and the asymptotics otherwise, gives a uniform precision
of about  5 digits for $a=2/3$. Higher accuracy would require the calculation of the next order terms of the
asymptotics as in Section~\ref{sect:stable}. Obviously, this mixed numerical-asymptotic method has optimal complexity $O(1)$.
\end{itemize}

\begin{acknowledgement}
The research of F.B., A.I., and G.W. was supported by the DFG-Collaborative Research Center, TRR 109, ``Discretization in Geometry and Dynamics.''
\end{acknowledgement}

\bibliographystyle{spmpsci}
\bibliography{Zalpha}

\bigskip

{\footnotesize
\noindent 
(Folkmar Bornemann) {\sc Zentrum Mathematik -- M3, Technische Universität München, Germany}\\*[0.5mm]
{\em E-mail address}: {\tt bornemann@tum.de}

\medskip

\noindent 
(Alexander Its) {\sc Department of Mathematical Sciences, Indiana University--Purdue University, Indianapolis, USA}\\*[0.5mm]
{\em E-mail address}: {\tt itsa@math.iupui.edu}

\medskip

\noindent 
(Sheehan Olver) {\sc School of Mathematics and Statistics, The University of Sydney, Australia}\\*[0.5mm]
{\em E-mail address}: {\tt sheehan.olver@sydney.edu.au}

\medskip

\noindent 
(Georg Wechslberger) {\sc Zentrum Mathematik -- M3, Technische Universität München, Germany}\\*[0.5mm]
{\em E-mail address}: {\tt wechslbe@ma.tum.de}

}

\end{document}